\numberwithin{equation}{section}
\newtheorem{theorem}[equation]{Theorem}
\newtheorem{proposition}[equation]{Proposition}
\newtheorem{lemma}[equation]{Lemma}
\newtheorem{remark}[equation]{Remark}
\newcommand{\N}{\mathbbmss{N}}
\newcommand{\Q}{\mathbbmss{Q}}
\newcommand{\Z}{\mathbbmss{Z}}
\newcommand{\g}{\widehat{g}}
\newcommand{\f}{\widehat{f}}
\renewcommand{\mid}{\,|\,}
\newcommand{\ndiv}{\not|\,}
\title[{Factorization of quadratic polynomials in $\Z[[x]]$}]%
{Factorization of quadratic polynomials in the ring of
formal power series over $\Z$}
\author{Daniel Birmajer}
\address{Department of Mathematics\\ Nazareth College\\
4245 East Avenue\\ Rochester, NY 14618}
\email{abirmaj6@naz.edu}
\author{Juan B. Gil}
\address{Penn State Altoona\\ 3000 Ivyside Park\\ Altoona, PA 16601.}
\email{jgil@psu.edu}
\author{Michael D. Weiner}
\address{Penn State Altoona\\ 3000 Ivyside Park\\ Altoona, PA 16601.}
\email{mdw8@psu.edu}
\subjclass[2000]{13F25;11Y05,13P05}
\begin{document}

\begin{abstract}
We establish necessary and sufficient conditions for a quadratic 
polynomial to be irreducible in the ring $\Z[[x]]$ of formal power series 
over the integers. In particular, for polynomials of the form 
$p^n+p^m\beta x+\alpha x^2$ with $n,m\ge 1$ and $p$ prime, we show that 
reducibility in $\Z[[x]]$ is equivalent to reducibility in $\Z_p[x]$, 
the ring of polynomials over the $p$-adic integers. 
\end{abstract}

\maketitle
\newcounter{prfeqn}

\section{Introduction}

If $K$ is a field, $\text{char}\,K\not=2$, the question of whether or not
a quadratic polynomial is reducible in the polynomial ring $K[x]$ is well 
understood: A polynomial $f(x)=c+bx+ax^2$, with $a\ne 0$, can be written 
as a product of two linear factors in $K[x]$ if and only if its 
discriminant $b^2-4ac$ is a square in $K$. Moreover, by Gauss' Lemma,
if $D$ is a unique factorization domain with field of fractions $K$, then 
a primitive quadratic polynomial in $D[x]$ is reducible if and only it is 
reducible in $K[x]$.
\par
If we consider the polynomials in $\Z[x]$ as elements of $\Z[[x]]$, the 
ring of formal power series over $\Z$, the factorization theory has a 
different flavor. A power series over an integral domain $D$ is a unit 
in $D[[x]]$ if and only if its constant term is a unit in $D$, so 
irreducible elements in $\Z[x]$, such as $1+x$, are invertible as power
series. On the other hand, any power series whose constant term is not a 
unit or a prime power, is reducible in $\Z[[x]]$, hence we can produce 
many examples of polynomials that are reducible as power series, yet 
irreducible in $\Z[x]$.
\par
Similarly, when considering polynomials with integer coefficients as 
elements of $\Z[[x]]$ and as polynomials over $\Z_p$, the ring of $p$-adic 
integers, we also observe different behaviors in their arithmetic
properties. For instance, the polynomial $p^2+x+x^2$, which is irreducible 
as a power series, is reducible in $\Z_p[x]$ for any prime $p$. On the 
other hand, $6+2x+x^2$ is reducible in $\Z[[x]]$ and in $\Z_3[x]$, 
but it is irreducible in $\Z_2[x]$ and $\Z_5[x]$. 
\par 
In this paper, we provide a complete picture of the factorization theory 
for quadratic polynomials in $\Z[[x]]$. In Section~\ref{s:factorization}
we discuss the necessary background, treat some basic cases, and develop 
some preliminary results. In Section~\ref{sec:OddPrimePower} we study 
polynomials of the form $p^n+p^m\beta x +\alpha x^2$ (the only ones not 
discussed in Section~\ref{s:factorization}) and show a revealing connection
between $\Z[[x]]$ and $\Z_p[x]$, cf. Theorem~\ref{t:main}. 
In Section~\ref{sec:FurtherCriteria} we extend our results to power series
and give some reducibility criteria that rely on the knowledge of their 
quadratic part.
\par
A standard reference for an introduction to divisibility over integral 
domains is \cite{DuFo}. For an extensive treatment of the arithmetic on 
the ring of formal power series over an integral domain the reader is 
referred to \cite{Ei95} and \cite{Ka70}. All the necessary 
material about the ring $\Z_p$ of $p$-adic numbers, can be found for 
instance in \cite{BoSh,Ei95,Serre}.

\section{Factorization in the ring of power series}\label{s:factorization}

In order to place our main result in the appropriate context, and for the
reader's convenience, we review some elementary facts about the
factorization theory in $\Z[[x]]$. First, recall that $\Z[[x]]$ is a
unique factorization domain. Moreover, if $f(x)$ is a formal power series
in $\Z[[x]]$ and $f_0\in \Z$ is its constant term, then: 
\begin{enumerate}[(a)]
\item $f(x)$ is invertible if and only if $f_0=\pm 1$.
\item If $f_0$ is prime then $f(x)$ is irreducible.
\item If $f_0$ is not a unit or a prime power then $f(x)$ is reducible.
\item If $f(x)=f_0$ is a constant then it is irreducible if and only if
$f_0$ is prime.
\item If $f(x)=p^m+f_1 x$, with $p$ prime and $m\ge 1$, 
then $f(x)$ is irreducible if and only if $\gcd(p, f_1)=1$.
\end{enumerate}

For an accessible and  more detailed treatment of the divisibility theory
in $\Z[[x]]$ the reader is referred to \cite{BiGi}.

The above criteria are definitive for deciding irreducibility 
in $\Z[[x]]$ for constant and linear polynomials. The next natural step is 
to examine quadratic polynomials 
\begin{equation}\label{GenericQuadratic}
  f(x)=f_0+f_1 x + f_2 x^2\quad \text{with }\; f_2\ne 0,\; f_i\in \Z. 
\end{equation}
Unless $f_0$ is a prime power, we know that $f(x)$ is either a unit or it
is reducible in $\Z[[x]]$. On the other hand, if $f_0=p^n$, $n> 1$, $p$ 
prime, and if $f(x)=a(x)b(x)$ is a proper factorization, then we must have 
$a_0=p^s, b_0=p^t$ with $s,t\ge 1$, $s+t=n$.  This implies 
$f_1=p^s b_1+p^t a_1$, so we conclude that $f(x)$ is irreducible
unless $p\mid f_1$. Therefore, for a quadratic polynomial we have:
\begin{enumerate}[(a)]
\setcounter{enumi}{5}
\item If $f_0=p^n$, $n>1$, and $p \ndiv f_1$, then $f(x)$ is irreducible 
in $\Z[[x]]$.
\item If $p$ divides $f_0$, $f_1$, and $f_2$, then $f(x)$ is either 
reducible or associate to $p$.  
\end{enumerate}

\medskip
At this point, it only remains to understand polynomials of the form
\[ f(x)=p^n+ p^m\beta x +\alpha x^2, \]
with $n,m\ge 1$, $\gcd(p,\alpha)=1$, and $\gcd(p,\beta)=1$ or $\beta=0$.
We will analyze these polynomials in the next section and will produce 
explicit factorizations in $\Z[[x]]$, when appropriate. 
Our results will provide the following additional irreducibility criterion
for the polynomial \eqref{GenericQuadratic}:
\begin{enumerate}[(a)]
\setcounter{enumi}{7}
\item If $f_0=p^n$, $n>1$, $p\mid f_1$, and $\gcd(p,f_2)=1$, then $f(x)$ is 
reducible in $\Z[[x]]$ if and only if it is reducible in $\Z_p[x]$.
\end{enumerate}

Note that items (a)--(c) and (f)--(h)  give complete irreducibility criteria 
for quadratic polynomials in $\Z[[x]]$. 
\section{Polynomials of the form $p^n+ p^m\beta x +\alpha x^2$}
\label{sec:OddPrimePower}

Let $p$ be an odd prime, let $\alpha,\beta\in\Z$ be such that
$\gcd(p,\alpha)=1$ and $\gcd(p,\beta)=1$.

\begin{proposition}\label{p-easycase}
Let $f(x)=p^n+ p^m\beta x +\alpha x^2$ with $n,m\ge 1$.
\begin{enumerate}[$(i)$]
\item 
If $2m<n$, then $f(x)$ is reducible in both $\Z_p[x]$ and $\Z[[x]]$.
\item 
If $2m>n$ and $n$ is odd, then $f(x)$ is irreducible in both $\Z_p[x]$ 
and $\Z[[x]]$.
\end{enumerate}
\end{proposition}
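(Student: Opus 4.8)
The plan is to work with the discriminant $\Delta = p^{2m}\beta^2 - 4\alpha p^n$ and translate each statement about $\Z_p[x]$ into a statement about whether $\Delta$ is a square in $\Z_p$, while simultaneously producing an explicit factorization (or obstruction) in $\Z[[x]]$ via Hensel-type lifting and Lemma~\ref{l:ChoosingLemma}. Since $p$ is odd, $f(x)=p^n+p^m\beta x+\alpha x^2$ is reducible over $\Z_p$ iff $\Delta$ is a square in $\Z_p$, and because $\gcd(p,\alpha)=1$ and $\gcd(p,\beta)=1$, the $p$-adic valuation of $\Delta$ is exactly $\min(2m,n)$ — this is the first observation, and it immediately forces a parity consideration on that valuation.

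For part $(i)$, $2m<n$ gives $v_p(\Delta)=2m$, an even number, and $\Delta/p^{2m} = \beta^2 - 4\alpha p^{n-2m} \equiv \beta^2 \pmod p$ is a nonzero square mod $p$, hence a square in $\Z_p^\times$ by Hensel's lemma; therefore $\Delta$ is a square in $\Z_p$ and $f$ is reducible in $\Z_p[x]$. For the $\Z[[x]]$ side I would not go through $\Z_p$ but instead exhibit a factorization directly: seek $f(x)=a(x)b(x)$ with $a_0=p^s$, $b_0=p^t$, $s+t=n$, choosing $s=m$ and $t=n-m$ (note $t>m\ge 1$). Writing $a(x)=p^m+a_1x+\cdots$ and $b(x)=p^{n-m}+b_1x+\cdots$ and matching coefficients, the constant-term condition holds, the $x^1$-coefficient equation $p^m\beta = p^m b_1 + p^{n-m}a_1$ can be solved by taking $a_1$ so that $b_1 \equiv \beta \pmod{p^{n-2m}}$ with $p\nmid b_1$ (using $\gcd(p,\beta)=1$), and then all higher coefficients are determined recursively because the leading behavior is governed by the unit $b_0/ p^{n-m}$... more carefully, one uses that the product of the two lowest-degree nonzero "reduced" parts is a unit, so the remaining power-series equations solve uniquely over $\Z$. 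I would streamline this by first applying Lemma~\ref{l:ChoosingLemma} to assume $a(x)=p^m+\lambda x$ is linear with $p\nmid\lambda$, and then divide $f(x)$ by $a(x)$ in $\Z[[x]]$, checking that the quotient lies in $\Z[[x]]$ (not merely $\mathbb Q[[x]]$) precisely when the $x$-coefficients are chosen compatibly modulo the appropriate power of $p$; the arithmetic condition that makes the division integral is again $2m<n$.

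For part $(ii)$, $2m>n$ with $n$ odd gives $v_p(\Delta)=n$, which is odd, so $\Delta$ cannot be a square in $\Z_p$ and hence $f$ is irreducible in $\Z_p[x]$. For irreducibility in $\Z[[x]]$: by the discussion preceding this section, a proper factorization of the primitive polynomial $f$ must have the form $a(x)b(x)$ with $a_0=p^s$, $b_0=p^t$, $s,t\ge1$, $s+t=n$; comparing $x$-coefficients gives $p^m\beta = p^s b_1 + p^t a_1$, and since $p\nmid\beta$ this forces $\min(s,t)\le m$, say $s\le m$, and then $p^{s}\mid p^m\beta$ and $p^s\mid p^tb_1$... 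I would push this further by extracting the $x^2$-coefficient relation $\alpha = p^s b_2 + a_1 b_1 + p^t a_2$, and combine the two to derive, modulo a suitable power of $p$, that $\alpha$ is congruent to a square times a unit in a way that contradicts $n$ odd; equivalently, one shows the putative factorization would yield a square root of $\Delta$ in $\Z_p$, contradicting $v_p(\Delta)$ odd. The cleanest route is: any factorization in $\Z[[x]]$ would, after clearing denominators or reducing mod $p^N$ for large $N$, produce a factorization in $\Z_p[x]$ (since $\Z[[x]]\hookrightarrow \Z_p[[x]]$ and a quadratic with a root in $\Z_p[[x]]$ has that root in $\Z_p$ by comparing valuations), contradicting part of the $\Z_p$ analysis.

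The main obstacle is the $\Z[[x]]\Rightarrow\Z_p[x]$ direction in part $(ii)$: showing that a factorization of $f$ as a power series forces one over the $p$-adics. The subtlety is that a factor $a(x)\in\Z[[x]]$ with $a_0=p^s$ need not be a polynomial, so one must argue that the constraint coming from $f$ being \emph{quadratic} (only three nonzero coefficients) rigidly propagates through all coefficient equations, pinning down enough $p$-adic information to reconstruct a square root of $\Delta$ in $\Z_p$; handling the valuations of $a_1,b_1$ relative to $m$ and $n$ is where the case hypotheses $2m>n$ and $n$ odd must be used decisively.
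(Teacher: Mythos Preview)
Your discriminant analysis for $\Z_p[x]$ in both parts is correct and matches the paper exactly. The gaps are on the $\Z[[x]]$ side.

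For part $(ii)$ you identify ``$\Z[[x]]\Rightarrow\Z_p[x]$'' as the main obstacle and propose either a discriminant-style contradiction or an embedding into $\Z_p[[x]]$. In fact the paper's argument here is two lines and purely elementary, and you already wrote down both equations needed. From $\alpha = p^s b_2 + a_1 b_1 + p^t a_2$ with $\gcd(p,\alpha)=1$ you get immediately $p\nmid a_1$ and $p\nmid b_1$. Now feed that into $p^m\beta = p^s b_1 + p^t a_1$: since $n$ is odd we have $s\neq t$, say $s<t$; the right side then has $p$-adic valuation exactly $s$, so $s=m$, whence $2m=2s<s+t=n$, contradicting $2m>n$. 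There is no need to reconstruct a $p$-adic square root of $\Delta$ or to invoke Weierstrass preparation in $\Z_p[[x]]$; you had the ingredients but combined them in the wrong order (you went to the $x$-coefficient first and tried to bound $\min(s,t)$ without knowing $a_1,b_1$ are units mod $p$).

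For part $(i)$ in $\Z[[x]]$ your setup $a_0=p^m$, $b_0=p^{n-m}$ is the paper's, but two issues: first, Lemma~\ref{l:ChoosingLemma} is stated only for $a_0=p$, not $a_0=p^m$, so you cannot invoke it to reduce $a(x)$ to a linear polynomial; second, ``all higher coefficients are determined recursively'' is where the actual content lies and needs to be made explicit. The paper does this by introducing $t_k=b_k+p^{n-2m}a_k$, noting that the auxiliary polynomial $p^{n-2m}y^2-\beta y+\alpha$ has a root in $\Z/p^m\Z$ (its discriminant is a square in $\Z_p$), and then solving at each step an equation of the form $0=p^m t_{N+1}+(\beta-2p^{n-2m}a_1)a_N+v_N$, which is solvable over $\Z$ because $\gcd(p,\beta)=1$ makes $\beta-2p^{n-2m}a_1$ a unit mod $p$. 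Your sketch gestures at this but does not isolate the unit that drives the recursion.
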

\begin{proof}
$(i)$ Observe first that the discriminant of $f(x)$ is
\begin{equation*}
 p^{2m}\beta^2-4\alpha p^n=p^{2m}(\beta^2-4\alpha p^{n-2m}),
\end{equation*}
a nonzero square in $\Z_p$, and so $f(x)$ is reducible in $\Z_p[x]$.
To show that $f(x)$ is reducible as a power series, we will find
sequences $\{a_k\}$ and $\{b_k\}$ such that
\begin{equation*}
f(x)=(p^m+a_1x+a_2x^2+\cdots)(p^{n-m}+b_1x+b_2x^2+\cdots).
\end{equation*}
For $k\ge 1$ let $t_k=b_k+p^{n-2m}a_k$. For the above factorization to
hold, we need 
\[ p^m\beta=p^{n-m}a_1+p^m b_1, \;\text{ so we have }
   t_1=\beta. \]
Let $g(x)=p^{n-2m} x^2 -\beta x+\alpha$. Since $\gcd(p,\beta)=1$, this 
polynomial has a root in $\Z/p\Z$ while $g'(x)=2p^{n-2m} x-\beta$ has 
none. By Hensel's Lemma $g(x)$ has a root in $\Z_p[x]$ and so, 
in particular, there are integers $a_1$ and $t_2$ such that
\[ p^{n-2m} a_1^2 -\beta a_1+\alpha = p^m t_2. \]

Suppose that we have defined $a_k$, $t_{k+1}$ for $k=1,\dots,N-1$,
$N\ge 2$, and let
\begin{equation*}
v_N=a_1t_{N}+\sum_{k=2}^{N-1}a_k(t_{N+1-k}-p^{n-2m}a_{N+1-k}).
\end{equation*}
We want to define $a_N$ and $t_{N+1}$ in such a way that
$\sum_{k=0}^{N+1}a_kb_{N+1-k}=0$ for $N\ge 2$. In other words, we need
\begin{align*}
0 &= \sum_{k=0}^{N+1} a_k b_{N+1-k} \\
  &=a_0b_{N+1}+a_{N+1}b_0+a_{N}b_1+a_1b_{N}+\sum_{k=2}^{N-1}a_kb_{N+1-k}\\
  &=p^m t_{N+1}+(\beta-p^{n-2m}a_1)a_{N}+a_1(t_{N}-p^{n-2m}a_{N})
    +\sum_{k=2}^{N-1}a_kb_{N+1-k}\\
  &=p^m t_{N+1}+(\beta-2p^{n-2m}a_1)a_{N}+v_N.
\end{align*}
At last, since $\gcd(p,\beta)=1$, this equation
can be solved for $t_{N+1}, a_{N}\in \Z$. This shows that $f(x)$ is 
reducible in $\Z[[x]]$.

\medskip
$(ii)$ In this case, the discriminant of $f(x)$,  
\begin{equation*}
 p^{2m}\beta^2-4\alpha p^n=p^n(p^{2m-n}\beta^2-4\alpha), 
\end{equation*}
is not a square in $\Z_p$.  Thus $f(x)$ is irreducible as a polynomial
over $\Z_p$. To show that $f(x)$ is irreducible as a 
power series, assume
\begin{equation*}
f(x)=(p^s+a_1x+a_2x^2+\cdots)(p^t+b_1x+b_2x^2+\cdots)
\end{equation*}
with $t>s\ge 1$, $s+t=n$. Note that $t\not=s$ because $n$ is odd.  
Then we must have
\begin{align*}
p^m\beta &=p^ta_1+p^sb_1, \\
\alpha &=p^ta_2+a_1b_1+p^sb_2. 
\end{align*}
Since $p$ and $\alpha$ are coprime, it follows that 
$\gcd(p,a_1)=1=\gcd(p,b_1)$. Therefore, it must be $s=m$, and so 
$2m=2s< s+t=n$.  
\end{proof}

It remains to analyze the cases when $n$ is even, say $n=2\nu$,
and $m\ge \nu\ge 1$. 

\begin{proposition}\label{p-hardcase}
Let $m\ge \nu$.
The polynomial $f(x)=p^{2\nu}+p^m\beta x +\alpha x^2$ is reducible in
$\Z[[x]]$ if and only if $\f(x)= p^2 + p^{m-\nu+1}\beta x +\alpha x^2$ 
is reducible in $\Z_p[x]$.
\end{proposition}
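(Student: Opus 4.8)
First I would reduce both conditions to the single statement that $f$ has a root in $\Z_p$. Put $\delta=p^{2(m-\nu)}\beta^2-4\alpha$. Since $m\ge\nu$, the discriminant of $f$ equals $p^{2\nu}\delta$ and the discriminant of $\f$ equals $p^2\delta$. Because $p$ is odd and $\alpha$ is a unit in $\Z_p$, completing the square shows that a quadratic $\alpha x^2+bx+c$ with $b,c\in\Z_p$ has a root in $\Z_p$ iff $b^2-4\alpha c$ is a square in $\Z_p$, and that such a quadratic is reducible in $\Z_p[x]$ exactly when it has a root in $\Z_p$. Hence $\f$ is reducible in $\Z_p[x]$ $\iff$ $\delta$ is a square in $\Z_p$ $\iff$ $f$ has a root in $\Z_p$, and the task becomes to prove that $f$ is reducible in $\Z[[x]]$ if and only if $f$ has a root in $\Z_p$.

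To prove the ``only if'' implication, I would start from a proper factorization $f=ab$ in $\Z[[x]]$. As recalled in Section~\ref{s:factorization}, $a_0=p^s$ and $b_0=p^t$ with $s,t\ge1$, $s+t=2\nu$; comparison of the $x^2$-coefficient gives $\alpha\equiv a_1b_1\pmod p$, so $p\nmid a_1$ and $p\nmid b_1$, and then the $x$-coefficient identity $p^m\beta=p^sb_1+p^ta_1$ together with the proof of Proposition~\ref{p-easycase}$(ii)$ and $m\ge\nu\ge s$ rules out $s<t$, forcing $s=t=\nu$. Regarding $a$ as an element of $\Z_p[[x]]$, it converges on $p\Z_p$, with $a(0)=p^\nu$ and $a'(0)=a_1\in\Z_p^{\times}$, and $|a(0)|_p\le p^{-1}<|a'(0)|_p^2$; Hensel's lemma (Newton's iteration from $0$) then yields $r\in p\Z_p$ with $a(r)=0$, whence $f(r)=a(r)b(r)=0$ and $f$ has a root in $\Z_p$.

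For the ``if'' implication, I would start from $f(r_1)=0$ with $r_1\in\Z_p$ and write $f(x)=\alpha(x-r_1)(x-r_2)$ with $r_2\in\Z_p$. From $v_p(r_1r_2)=2\nu$ and $v_p(r_1+r_2)=m\ge\nu$ one checks $v_p(r_1)=v_p(r_2)=\nu$, so $r_1=p^\nu\rho$ with $\rho\in\Z_p^{\times}$. Next I would build $a=\sum_{k\ge0}a_kx^k\in\Z[[x]]$ with $a_0=p^\nu$ and $a(r_1)=0$ recursively: given $a_0,\dots,a_n$ with $v_p(s_n)\ge(n+1)\nu$, where $s_n=\sum_{k=0}^na_kr_1^k$, pick $a_{n+1}\in\Z$ congruent modulo $p^\nu$ to $-\rho^{-(n+1)}p^{-(n+1)\nu}s_n\in\Z_p$; this preserves $v_p(s_{n+1})\ge(n+2)\nu$, so $s_n\to0$ and $a(r_1)=0$, while $a_1\equiv-\rho^{-1}\pmod{p^\nu}$ shows $p\nmid a_1$. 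Synthetic division of $a$ by $x-r_1$ gives $a(x)=(x-r_1)q(x)$ with $q(x)=-\sum_{k\ge0}s_kr_1^{-k-1}x^k$; the bound $v_p(s_k)\ge(k+1)\nu$ shows $q\in\Z_p[[x]]$ and $q(0)=-\rho^{-1}\in\Z_p^{\times}$, so $q$ is a unit in $\Z_p[[x]]$ and $b:=\alpha(x-r_2)q(x)^{-1}\in\Z_p[[x]]$ satisfies $ab=f$. Finally $b\in\Z[[x]]$: arguing inductively, $p^\nu b_k=f_k-\sum_{i=1}^ka_ib_{k-i}$ is a rational integer while $b_k\in\Z_p$ forces $p^\nu\mid p^\nu b_k$ in $\Z$, so $b_k\in\Z$; since $a_0=b_0=p^\nu$ are non-units, $f=ab$ is a proper factorization in $\Z[[x]]$.

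I expect the ``if'' direction to be the main obstacle. Producing a factorization over $\Z_p[[x]]$ from a $p$-adic root is essentially synthetic division, provided one has first arranged $p\nmid a_1$ so that $x-r_1$ splits off with a unit cofactor; the delicate step is descending the cofactor $b$ from $\Z_p[[x]]$ to $\Z[[x]]$, which hinges on each $p^\nu b_k$ being simultaneously a rational integer and divisible by $p^\nu$ in $\Z_p$. By contrast the ``only if'' direction is comparatively soft once $a_0=b_0=p^\nu$ and $p\nmid a_1$ are in place, since Hensel's lemma then applies to the factor $a$ even though (because $m\ge\nu$) it need not apply to $f$ itself.
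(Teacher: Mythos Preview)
Your argument is correct and takes a genuinely different route from the paper's. The paper breaks the equivalence into three pieces: a rescaling $x\mapsto p^{\nu-1}x$ to pass from $f$ to $\f$ in $\Z[[x]]$ (Lemma~\ref{lemmaA}); an application of the ``choosing lemma'' (Lemma~\ref{l:ChoosingLemma}) to kill coefficients $a_2,\dots,a_{t+2}$ of one factor, followed by explicit equation-chasing to force the discriminant to be a $p$-adic square (Lemma~\ref{lemmaB}); and finally explicit coefficient recursions, with separate treatments of $m>\nu$ versus $m=\nu$ and, in the latter, of $\nu>\ell$ versus $\nu\le\ell$ (Lemma~\ref{lemmaC}). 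Your approach bypasses all of this by recasting both sides as ``$f$ has a root in $\Z_p$'': for the forward direction you apply Hensel/Newton to the factor $a$ (valid because $a$ defines a $p$-adic analytic function on $p\Z_p$ with $|a(0)|_p<|a'(0)|_p^2$; equivalently this is Weierstrass preparation in $\Z_p[[x]]$ with $d=1$), and for the backward direction you engineer $a\in\Z[[x]]$ vanishing at $r_1$, then descend the $\Z_p[[x]]$-cofactor $b=f/a$ to $\Z[[x]]$ via the clean observation that $p^\nu b_k\in\Z$ and $b_k\in\Z_p$ together force $b_k\in\Z$. Your proof is shorter and more conceptual, and avoids both Lemma~\ref{l:ChoosingLemma} and the case splits; the paper's proof, in exchange, is entirely elementary and yields explicit integer recursions suited to the algorithmic goals announced in the introduction.
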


This follows from the following three lemmas.

\begin{lemma}\label{lemmaA}
If $f(x)$ is reducible in $\Z[[x]]$, then $\f(x)$ is reducible in
$\Z[[x]]$.
\end{lemma}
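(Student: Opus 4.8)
The plan is to start from a putative proper factorization $f(x)=a(x)b(x)$ in $\Z[[x]]$ and first pin down the constant terms. Since $f_0=p^{2\nu}$ and neither factor is a unit, we must have, after absorbing signs, $a_0=p^s$ and $b_0=p^t$ with $s+t=2\nu$ and $s,t\ge 1$. Comparing the coefficients of $x$ and $x^2$ on both sides gives $p^m\beta=p^ta_1+p^sb_1$ and $\alpha=p^ta_2+a_1b_1+p^sb_2$; because $\gcd(p,\alpha)=1$, the second relation forces $\gcd(p,a_1)=\gcd(p,b_1)=1$. Then the $p$-adic valuation of $p^ta_1+p^sb_1$ is exactly $\min(s,t)$, so the first relation would give $\min(s,t)=m$ whenever $s\ne t$; but $s+t=2\nu$ and $s\ne t$ force $\min(s,t)<\nu\le m$, a contradiction. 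Hence $s=t=\nu$, that is, both factors have constant term $p^\nu$.

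The second and decisive idea is a rescaling that converts $f$ into $\f$. A direct computation shows $f(p^{\nu-1}x)=p^{2\nu}+p^{m+\nu-1}\beta x+p^{2\nu-2}\alpha x^2=p^{2\nu-2}\f(x)$. Writing $a(x)=p^\nu+x\tilde a(x)$ and $b(x)=p^\nu+x\tilde b(x)$ with $\tilde a,\tilde b\in\Z[[x]]$, and using $\nu\ge 1$, every coefficient of $a(p^{\nu-1}x)$ is divisible by $p^{\nu-1}$ (the constant term is $p^\nu=p^{\nu-1}\cdot p$, and the coefficient of $x^k$ for $k\ge 1$ is $p^{\nu-1}\cdot a_kp^{(k-1)(\nu-1)}$), so $a(p^{\nu-1}x)=p^{\nu-1}A(x)$ with $A(x)=p+x\tilde a(p^{\nu-1}x)\in\Z[[x]]$, and likewise $b(p^{\nu-1}x)=p^{\nu-1}B(x)$ with $B(x)=p+x\tilde b(p^{\nu-1}x)\in\Z[[x]]$. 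Pushing the factorization through the substitution yields $p^{2\nu-2}\f(x)=f(p^{\nu-1}x)=a(p^{\nu-1}x)b(p^{\nu-1}x)=p^{2\nu-2}A(x)B(x)$, and cancelling $p^{2\nu-2}$ (working in $\Q[[x]]$, say) gives $\f(x)=A(x)B(x)$. Since $A(0)=B(0)=p$ is not a unit in $\Z$, neither $A$ nor $B$ is a unit in $\Z[[x]]$, so this is a proper factorization and $\f(x)$ is reducible in $\Z[[x]]$.

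The hard part will be the first paragraph: ruling out the unbalanced splittings $s\ne t$ of the constant term is where all three hypotheses ($m\ge\nu$, $\gcd(p,\alpha)=1$, $\gcd(p,\beta)=1$) genuinely enter, and it is essential because if, say, $a_0=p^{\nu+1}$ and $b_0=p^{\nu-1}$, then the rescaled factor $B$ would become a unit and the argument would collapse. Once $a_0=b_0=p^\nu$ is secured, the rest is purely formal. One should also note that $\nu\ge 1$ here (since $n=2\nu\ge 2$), so the substitution $x\mapsto p^{\nu-1}x$ preserves integrality and $A,B$ really are power series over $\Z$; in the extreme case $\nu=1$ the substitution is the identity and the claim reads $f=\f$.
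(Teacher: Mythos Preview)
Your proof is correct and follows essentially the same approach as the paper's own proof: first force $a_0=b_0=p^\nu$ by comparing the $x$- and $x^2$-coefficients (using $\gcd(p,\alpha)=1$, $\gcd(p,\beta)=1$, and $m\ge\nu$), then apply the substitution $x\mapsto p^{\nu-1}x$ and divide each factor by $p^{\nu-1}$ to obtain a proper factorization of $\f(x)$. Your write-up is a bit more explicit than the paper's in verifying that the quotients $A,B$ lie in $\Z[[x]]$ and are non-units, but the argument is the same.
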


\begin{lemma}\label{lemmaB}
Let $\ell\ge 1$.  If the polynomial $p^2 + p^{\ell}\beta x +\alpha x^2$ 
is reducible in $\Z[[x]]$, then it is reducible in $\Z_p[x]$.
\end{lemma}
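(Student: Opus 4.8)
The plan is to convert a factorization of the polynomial $g(x)=p^2+p^{\ell}\beta x+\alpha x^2$ in $\Z[[x]]$, after the normalization supplied by Lemma~\ref{l:ChoosingLemma}, into a root in $\Z_p$ of the auxiliary quadratic $h(z)=z^2-p^{\ell-1}\beta z+\alpha$; once such a root $r$ is available, the identity $g(x)=(p+rx)\bigl(p+(p^{\ell-1}\beta-r)x\bigr)$ exhibits $g$ as a product of two non-units of $\Z_p[x]$. So first I would record the elementary consequences of reducibility in $\Z[[x]]$: writing $g(x)=a(x)b(x)$ with $a,b$ non-units, the constant term $p^2$ forces $a_0=b_0=p$ or $a_0=b_0=-p$, so after multiplying each factor by $-1$ if necessary we may take $a_0=b_0=p$; comparing the coefficients of $x$ and $x^2$ gives $a_1+b_1=p^{\ell-1}\beta$ and $a_1b_1\equiv\alpha\pmod p$, and since $\gcd(p,\alpha)=1$ this forces $\gcd(p,a_1)=1$, precisely the hypothesis needed to apply Lemma~\ref{l:ChoosingLemma}.

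Next, fix $t\ge 2$ and use Lemma~\ref{l:ChoosingLemma} to replace $a(x)$ by an associate $q(x)=p+q_1x+q_{t+1}x^{t+1}+\cdots$ with $q_1\equiv a_1\pmod p$ and $q_2=\cdots=q_t=0$; correspondingly $b(x)$ is replaced by an associate $\tilde b(x)$ with $\tilde b_0=p$, and still $g=q\tilde b$. For $2\le k\le t$ only $q_0$ and $q_1$ contribute to the coefficient of $x^k$ in $q\tilde b$, so $p\tilde b_2+q_1\tilde b_1=\alpha$ while $p\tilde b_k+q_1\tilde b_{k-1}=0$ for $3\le k\le t$. Iterating the latter relations and using $\gcd(p,q_1)=1$ forces $p^{t-2}\mid\tilde b_2$, and substituting $\tilde b_1=p^{\ell-1}\beta-q_1$ into $p\tilde b_2=\alpha-q_1\tilde b_1$ then gives $q_1^{2}-p^{\ell-1}\beta q_1+\alpha=p\tilde b_2\equiv 0\pmod{p^{t-1}}$. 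Thus $h(z)$ has a root modulo $p^{t-1}$ for every $t\ge 2$.

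Since $h$ then has roots modulo arbitrarily high powers of $p$, it has a root $r\in\Z_p$: the nonempty finite solution sets modulo $p^{n}$ form an inverse system, whose inverse limit is therefore nonempty (equivalently, one may pass to a convergent subsequence of the integers $q_1$ produced above and invoke continuity of $h$). Setting $r'=p^{\ell-1}\beta-r\in\Z_p$, the relation $h(r)=0$ gives $\alpha=rr'$ and $r+r'=p^{\ell-1}\beta$, whence $g(x)=(p+rx)(p+r'x)$. Neither factor is a unit of $\Z_p[x]$, since each has degree $1$ ($r=0$ or $r'=0$ would force $\alpha=0$); hence $g$ is reducible in $\Z_p[x]$.

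The step I expect to be the main obstacle is the second one: recognizing that Lemma~\ref{l:ChoosingLemma} lets us trade the entire power-series factorization for control of the single quantity $q_1$ modulo $p^{t-1}$, and carefully chasing the forced recursion $\tilde b_k=-q_1\tilde b_{k-1}/p$ (valid for $3\le k\le t$) to extract the divisibility $p^{t-1}\mid h(q_1)$. The passage from ``root modulo every $p^n$'' to ``root in $\Z_p$'' is standard, and the concluding factorization is a routine verification.
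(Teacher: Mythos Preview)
Your argument is correct. The core device---invoking Lemma~\ref{l:ChoosingLemma} to force $q_2=\cdots=q_t=0$ and then reading off the recursion $p\tilde b_k+q_1\tilde b_{k-1}=0$ for $3\le k\le t$---is exactly what the paper does, and your derivation that $p^{t-1}\mid h(q_1)$ matches the paper's computation that $p^{t+1}\mid s_2$ (with a harmless shift in the choice of $t$). The difference lies in the endgame. The paper does not let $t$ vary: it fixes $t$ once and for all by writing $p^{2\ell-2}\beta^2-4\alpha=p^{t}u$ with $\gcd(p,u)=1$, and from $a_1^2-p^{\ell-1}\beta a_1+\alpha\equiv 0\pmod{p^{t+2}}$ it deduces directly that $(2a_1-s_1)^2=p^t(u+4p^2v)$, whence $t$ is even and $u$ is a square mod $p$, so the discriminant is a square in $\Z_p$. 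Your route instead varies $t$, obtains roots of $h$ modulo every power of $p$, and appeals to compactness of $\Z_p$ to produce a genuine root; you then write down the linear factorization over $\Z_p$ explicitly. Your approach is slightly less constructive but has the virtue of being uniform in $p$: the paper's last step (``$u$ a square mod $p$ implies $u$ a square in $\Z_p$'') is stated for odd $p$ and needs a separate check when $p=2$, whereas your compactness argument needs no such case distinction.
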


\begin{lemma}\label{lemmaC}
If $\f(x)$ is reducible in $\Z_p[x]$, then $f(x)$ is reducible in
$\Z[[x]]$.
\end{lemma}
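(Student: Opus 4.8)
My strategy for Lemma~\ref{lemmaC} is to use the hypothesis to produce a $p$-adic root of $f$ of the right shape and then transfer an \emph{integral} factorization back into $\Z[[x]]$, rather than manipulating factorizations of $\f$ directly. First I would record the identity $f(p^{\nu-1}x)=p^{2\nu-2}\f(x)$. Since $\f$ is a primitive quadratic over the UFD $\Z_p$ (its leading coefficient $\alpha$ is a unit), reducibility makes it split into two linear factors over $\Z_p$ and hence gives a root $r\in\Z_p$; a short valuation count, using $\gcd(p,\alpha)=\gcd(p,\beta)=1$ and $m-\nu+1\ge 1$, forces $v_p(r)=1$. Therefore $\rho:=p^{\nu-1}r$ is a root of $f$ with $v_p(\rho)=\nu$, and in fact $f(x)=\alpha(x-\rho)(x-\sigma)$ in $\Z_p[x]$ with $v_p(\sigma)=\nu$ as well; write $\rho=p^{\nu}s$ with $s\in\Z_p^{\times}$.

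The central step is to construct a power series $b(x)=p^{\nu}+b_1x+b_2x^2+\cdots\in\Z[[x]]$ with $b(\rho)=0$. Because $\rho^k=p^{k\nu}s^k\to 0$, the series $\sum_{k\ge 0}b_k\rho^k$ converges in $\Z_p$ for any choice of integer coefficients, so I can choose $b_1,b_2,\dots$ one at a time, controlling only a congruence modulo $p^{\nu}$: having arranged $b_0+b_1\rho+\cdots+b_{k-1}\rho^{k-1}=p^{k\nu}w_{k-1}$ for some $w_{k-1}\in\Z_p$ (true for $k=1$ with $w_0=1$), pick $b_k\in\Z$ with $b_k\equiv -w_{k-1}s^{-k}\pmod{p^{\nu}}$; then the new partial sum becomes $p^{(k+1)\nu}w_k$ for some $w_k\in\Z_p$, and passing to the limit yields $b(\rho)=0$. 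In particular $b_1\equiv -s^{-1}\not\equiv 0\pmod p$.

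Now set $a(x):=f(x)/b(x)$. From $b(\rho)=0$ one rearranges $b(x)=\sum_{k\ge 1}b_k(x^k-\rho^k)=(x-\rho)U(x)$, where $U(x)=\sum_{k\ge 1}b_k\bigl(x^{k-1}+x^{k-2}\rho+\cdots+\rho^{k-1}\bigr)\in\Z_p[[x]]$ and $U(0)=-b_0/\rho=-1/s\in\Z_p^{\times}$; thus $U$ is a unit of $\Z_p[[x]]$ and $a=\alpha(x-\sigma)U(x)^{-1}\in\Z_p[[x]]$. Comparing coefficients in $f=ab$ (note $a_0=f_0/b_0=p^{\nu}$) gives $p^{\nu}(a_k+b_k)=f_k-\sum_{i=1}^{k-1}a_ib_{k-i}$ for $k\ge 1$; by induction on $k$ the right-hand side is an integer, and since $a_k\in\Z_p$ that integer must be divisible by $p^{\nu}$, so $a_k\in\Z$. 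Hence $a\in\Z[[x]]$, and $f=ab$ with $a_0=b_0=p^{\nu}$ exhibits $f$ as a product of two non-units, i.e.\ $f$ is reducible in $\Z[[x]]$.

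I expect the main obstacle to be the last step: the quotient $a=f/b$ is a priori only a $p$-adic power series, and the work is in seeing that its coefficients are honest integers. Peeling off the unit $U$ so that $a\in\Z_p[[x]]$ is manifest, and then combining the coefficient recursion with the fact that a rational number lying in both $\Z_p$ and $\Z[1/p]$ must be an integer, is how I would handle it. By contrast, the earlier steps are routine — in the construction of $b$ only a congruence modulo $p^{\nu}$ needs to be solved at each stage, which works precisely because $s$ is a $p$-adic unit.
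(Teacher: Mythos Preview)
Your proof is correct, and it follows a genuinely different route from the paper's.

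The paper proves Lemma~\ref{lemmaC} by building \emph{both} factors $a(x),b(x)\in\Z[[x]]$ simultaneously through explicit integer recursions for the sequences $a_k$ and $s_k=a_k+b_k$, and this forces a case analysis: first $m>\nu$ versus $m=\nu$, and in the latter case a further split according to whether $\nu>\ell$ or $\nu\le\ell$, where $p^{2\ell}\parallel(\beta^2-4\alpha)$. Each sub-case has its own bookkeeping to ensure the relevant linear congruence modulo a power of $p$ is solvable at every step.

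Your argument sidesteps all of this by working one factor at a time and letting $\Z_p$ absorb the difficulty. You first extract a $p$-adic root $\rho$ of $f$ with $v_p(\rho)=\nu$, then build $b\in\Z[[x]]$ with $b(\rho)=0$ by solving only a single congruence modulo $p^{\nu}$ at each stage (this is where the unit $s=\rho/p^{\nu}$ does the work). The factoring $b(x)=(x-\rho)U(x)$ with $U(0)\in\Z_p^{\times}$ puts $a=f/b$ manifestly in $\Z_p[[x]]$, and the final descent $a_k\in\Z_p\cap p^{-\nu}\Z=\Z$ via the coefficient recursion is clean. The payoff is a uniform, case-free proof (in fact your argument does not even use that $p$ is odd). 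What the paper's approach buys, by contrast, is a completely explicit algorithm over $\Z$ with no appeal to $p$-adic convergence or the identity $\Z_p\cap\Z[1/p]=\Z$; this is in line with the authors' stated aim of providing algorithmic factorizations.
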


\begin{proof}[\bf Proof of Lemma~\ref{lemmaA}]
We first observe that if $f(x)=a(x)b(x)$ is a proper factorization  
in $\Z[[x]]$, then $a_0=b_0=p^\nu$.
To see this, assume that $a_0=p^s$, $b_0=p^t$ with $s, t\ge 1$, $s+t=2\nu$.
Then we have that
\begin{equation*}
\alpha=p^s b_2+a_1 b_1 + p^t a_2.
\end{equation*}
Since $\gcd(p,\alpha)=1$, we conclude that $\gcd(p,a_1)= \gcd(p,b_1)=1$.
We also have 
\begin{equation*}
p^m\beta=p^s b_1+p^t a_1.
\end{equation*}
If $s<t$, then we would have $s<\nu\le m$, implying from the above
equation that  $p\mid b_1$, a contradiction.
Similarly, we can rule out the case $t<s$, hence $s=t=\nu$.

We now write $f(x)=a(x)b(x)$ with $a_0=b_0=p^{\nu}$. Since
\begin{equation*}
  p^{2\nu-2}\f(x)=f(p^{\nu-1}x)=a(p^{\nu-1}x)b(p^{\nu-1}x),
\end{equation*}
it follows that
\begin{equation*}
 \f(x)=\Big(\frac{a(p^{\nu-1}x)}{p^{\nu-1}}\Big)
 \Big(\frac{b(p^{\nu-1}x)}{p^{\nu-1}}\Big)
\end{equation*}
is a proper factorization of $\f(x)$ in $\Z[[x]]$.
\end{proof}

\begin{proof}[\bf Proof of Lemma~\ref{lemmaB}]
To prove that $g(x)=p^2 + p^{\ell}\beta x +\alpha x^2$ is reducible in
$\Z_p[x]$, we must show that its discriminant 
$p^{2\ell}\beta^2-4\alpha p^2$ is a square in $\Z_p$. 

If the discriminant is zero, we are done. Otherwise,
write $p^{2\ell-2}\beta^2-4\alpha=p^t u$ with $\gcd(p,u)=1$.
Suppose that $g(x)$ is reducible in $\Z[[x]]$.  
Without loss of generality we can assume that $g(x)$ admits a
factorization of the form
\begin{equation*}
p^2 + p^{\ell}\beta x +\alpha x^2=a(x)b(x)\quad\text{with }\;
a_0=b_0=p,\;\; a_2=a_3=\cdots =a_{t+2}=0.
\end{equation*}
With the notation $s_j=a_j+b_j$ for $j\ge 1$, 
we must have 
\begin{align*}
p^{\ell}\beta &=ps_1, \\
\alpha &= ps_2+a_1s_1-a_1^2.
\end{align*}
Then $s_1=p^{\ell-1}\beta$ and $a_1$ is a root of
$y^2-s_1y+\alpha\equiv 0\pmod p$. Note that $p\ndiv a_1$.

For $n=3$ we have
\begin{equation}\label{eq:base}
0=ps_3+a_1s_2.
\end{equation}
Then $p\mid s_2$ and $a_1^2-a_1s_1+\alpha\equiv 0 \pmod{p^2}$. 
For $n=4$ we have
\begin{equation*}
0=ps_4+a_1s_3.
\end{equation*}
Then $p\mid s_3$, which by \eqref{eq:base} implies that 
$p^2\mid s_2$, and so $a_1^2-a_1s_1+\alpha\equiv 0 \pmod{p^3}$. 
Working inductively, the equation
\begin{align*}
0= ps_{t+3}+a_1s_{t+2}
\end{align*}  
implies that $p^{t+1}\mid s_2$, and so $a_1^2-a_1s_1+\alpha= 
p^{t+2}v$ for some $v$.

Now, since
\[ (2a_1-s_1)^2 = (p^{2\ell-2}\beta^2-4\alpha) + 4p^{t+2}v 
   = p^t u +4p^{t+2}v=p^t(u+4p^2v), \]
and since $\gcd(p,u)=1$, we have that $t$ is even 
and that $u$ is a square mod $p$. Hence $p^{2(\ell-1)}\beta^2-4\alpha$ 
is a square in $\Z_p$, and so is $p^2(p^{2(\ell-1)}\beta^2-4\alpha)$.
Therefore, $g(x)$ is reducible in $\Z_p[x]$.
\end{proof}

\begin{proof}[\bf Proof of Lemma~\ref{lemmaC}]
We will consider the cases $m=\nu$ and $m>\nu$ separately. 
In both cases we will prove the reducibility of $f(x)$ in $\Z[[x]]$
by providing an explicit factorization algorithm. More precisely, 
we will give inductive algorithms (depending on $m$ and $\nu$) to find 
sequences $\{a_k\}$ and $\{b_k\}$ in $\Z$ such that 
\begin{equation}\label{fFactor}
   f(x)=\Big(\sum_{k=0}^\infty a_k x^k\Big) 
   \Big(\sum_{k=0}^\infty b_k x^k\Big). 
\end{equation}
For $k\ge 1$ we let $s_k=a_k+b_k$. 

\medskip\noindent
{\sc Case 1:} Let $m>\nu$. Since $\f(x)=p^2+p^{m-\nu+1}\beta x 
+\alpha x^2$ is reducible in $\Z_p[x]$, the polynomial 
$\g(x)=x^2-p^{m-\nu}\beta x+\alpha$ is reducible in $\Z_p[x]$,
too. Observe that the discriminant of $\f(x)$ is $p^2$ times the
discriminant of $\g(x)$.

Let 
\[ a_0=p^\nu=b_0 \quad\text{and}\quad s_1=p^{m-\nu}\beta. \]
Since $\g(x)$ is reducible in $\Z_p[x]$, it has a root in
$\Z/p^{\nu}\Z$. Let $a_1,s_2\in\Z$ be such that
\[ a_1^2 - p^{m-\nu}\beta a_1 +\alpha = p^\nu s_2. \]
Now, $m>\nu$ and $\gcd(p,\alpha)=1$ imply
$\gcd(p,a_1)=1$ and $\gcd(p^\nu,p^{m-\nu}\beta-2a_1)=1$.
We let $a_2$ and $s_3$ be integer numbers such that
\[ 0=p^\nu s_3 + (p^{m-\nu}\beta-2a_1)a_2 + a_1s_2. \]
Suppose we have defined $a_{k}$ and $s_{k+1}$ for $k=1,\dots,N-1$, 
$N\ge 3$, and let
\[ v_N=a_1s_N +\sum_{k=2}^{N-1}a_k(s_{N+1-k}-a_{N+1-k}). \]
We know that $\gcd(p^\nu,p^{m-\nu}\beta-2a_1)=1$, so the equation
\[ 0=p^\nu s_{N+1}+(p^{m-\nu}\beta-2a_1)a_N + v_N \]
can be solved for $a_N, s_{N+1}\in\Z$. 
For $k=1,\dots,N$ we now have $a_k$ and $b_k$, and
it can be easily checked that the sequences $\{a_k\}$ 
and $\{b_k\}$ give \eqref{fFactor}.

\medskip\noindent
{\sc Case 2:} If $m=\nu$,
then 
\begin{equation*}
\f(x)=p^2 + p\beta x +\alpha x^2 \quad\text{and}\quad
 f(x)=p^{2\nu} + p^{\nu}\beta x +\alpha x^2.
\end{equation*}
Since $\f(x)$ is reducible in $\Z_p[x]$, so is $\g(x)=x^2-\beta x+\alpha$.
If $\beta^2-4\alpha=0$, then $\beta$ is even and
$f(x)=\big(\frac{\beta}{2}x+p^{\nu}\big)^2$, which is a proper 
factorization in $\Z[[x]]$. If $\beta^2-4\alpha\not=0$,     
there are numbers $\ell\in\N_0$ and $q\in\Z$ such that 
\begin{equation*}
 \beta^2-4\alpha=p^{2\ell}q 
 \;\text{ with } \gcd(p,q)=1. 
\end{equation*}
Moreover, $\g(x)$ has a root in $\Z/p^{n}\Z$ for every $n\in\N$.
In particular, for $n=3\max(\ell,\nu)$,
there are integers $a$ and $r$ such that  
\begin{equation}\label{RootofP1}
 a^2 -\beta a +\alpha = p^{\mu} r
 \;\text{ with } \gcd(p,r)=1,
\end{equation}
for some $\mu\ge 3\max(\ell,\nu)$. 
Since $(\beta-2a)^2-(\beta^2-4\alpha)=4\g(a)$, we get
\begin{equation*}
 (\beta-2a)^2=4p^\mu r + p^{2\ell}q=p^{2\ell}(4p^{\mu-2\ell}r + q),
\end{equation*}
hence we can write 
\begin{equation}\label{RootofP2}
 \beta-2a=p^{\ell}t \;\text{ with } \gcd(p,t)=1.
\end{equation}

Again, our goal is to construct sequences $\{a_k\}$ and $\{b_k\}$ such
that \eqref{fFactor} holds.  This will be done with slightly different
algorithms for $\nu>\ell$ and $\nu\le\ell$. In both cases we let
\begin{gather*} 
a_0=p^\nu=b_0, \quad s_1=\beta, \\
a_1=a, \quad s_2= p^{\mu-\nu}r,
\end{gather*}
where $a$ and $r$ are the integers from \eqref{RootofP1}.
With these choices, the first three terms in the expansion of
\eqref{fFactor} coincide with $f(x)$. 

Assume $\nu>\ell$. Let 
\[ \tilde a_1=0, \quad u_1=s_2=p^{\mu-\nu}r, \;\;\text{ and }\;\;
   u_2=-p^{\mu-2\nu}ra_1.
\]
Let $t$ be as in \eqref{RootofP2}.
For $k\ge 2$ we will define $\tilde a_k$ and 
$u_{k+1}$ such that the sequences defined by 
\begin{equation}\label{FactorSeq1}
   a_k = p^{\nu-\ell}\tilde a_k \quad\text{and}\quad
   b_k = u_{k-1}-t\tilde a_{k-1}-a_k 
\end{equation}  
give the factorization \eqref{fFactor}. Note that 
$s_{k+1}=a_{k+1}+b_{k+1}=u_{k}-t\tilde a_{k}$.  

Let $\tilde a_2= p^{\mu-2\nu}$ and
$u_3= -p^{\mu-3\nu}\big[p^{\nu-\ell}(s_2-a_2)-ta_1\big]$. Thus
\[ p^\nu u_3 + \big[p^{\nu-\ell}(s_2-a_2)-ta_1\big]\tilde a_2=0. \]
Suppose we have defined $\tilde a_k$ and $u_{k+1}$ for $k=1,\dots,N-1$,
$N\geq 3$, and let 
\[ v_N= a_1 u_N + a_2 s_N + \sum_{k=3}^{N-1}a_k(s_{N+2-k}-a_{N+2-k}). \]
Since $\gcd(p,\beta)=1$, the relation \eqref{RootofP2} implies
\[ \gcd(p,a_1)=1 \;\text{ and }\; \gcd(p^{\nu-\ell},
   p^{\nu-\ell}(s_2-2a_2)-ta_1)=1. 
\]
Therefore, there are $\tilde a_N, u_{N+1}\in \Z$ such that    
\[ p^\nu u_{N+1} + \big[p^{\nu-\ell}(s_2-2a_2)-ta_1\big]\tilde a_N 
   + v_N = 0. 
\]
The sequences $\{a_k\}$ and $\{b_k\}$ defined by \eqref{FactorSeq1} 
give \eqref{fFactor} when $\nu>\ell$.

Assume now $\nu\le\ell$. In this case, for $k\ge 2$ we will find 
$\tilde a_k$ and $\tilde s_{k+1}$ such that the sequences defined by
\begin{equation*}
a_k=p^\ell \tilde a_k \quad\text{and}\quad
b_k=p^{3\ell-\nu}\tilde s_k-a_k
\end{equation*}
give a factorization of $f(x)$. Let $r$ and $t$ be as in 
\eqref{RootofP1} and \eqref{RootofP2}, respectively.
Since $\gcd(p^\nu,t)=1$, there are $y,z\in\Z$ such that
\[ p^\nu y+tz + r=0. \]
Let $\tilde a_2=p^{\mu-2\ell-\nu}z a_1$, $\tilde s_2=p^{\mu-3\ell}r$,
and $\tilde s_3= p^{\mu-3\ell}y a_1$. Note that
\[ p^\ell \tilde s_3 + t\tilde a_2 + a_1p^{\ell-\nu}\tilde s_2 =0. \]
Suppose we have defined $\tilde a_k$ and $\tilde s_{k+1}$ for 
$k=1,\dots,N-1$, $N\geq 3$, and let
\[ \tilde v_N= a_1 p^{\ell-\nu}\tilde s_N + \sum_{k=2}^{N-1}
   \tilde a_k(p^{2\ell-\nu}\tilde s_{N+1-k}-\tilde a_{N+1-k}). 
\]
Finally, since $\gcd(p^\ell,t)=1$, the equation
\[ 0=p^\ell \tilde s_{N+1} + t\tilde a_N + \tilde v_N \]
can be solved for $\tilde a_N, \tilde s_{N+1}\in \Z$. This implies
\begin{align*}
0 
 &=p^{3\ell} \tilde s_{N+1} + p^{2\ell}t\tilde a_N + p^{2\ell}\tilde v_N\\ 
 &=p^{\nu} s_{N+1} +p^{\ell}t a_N + a_1s_N +  
   \sum_{k=2}^{N-1} a_k(s_{N+1-k}-a_{N+1-k}) \\
 &=p^{\nu} s_{N+1} +(\beta-2a_1) a_N + a_1s_N +  
   \sum_{k=2}^{N-1} a_k b_{N+1-k} 
 =\sum_{k=0}^{N+1} a_k b_{N+1-k},
\end{align*}
as desired. This completes the proof.
\end{proof}

The main result of this section is the following.

\begin{theorem}\label{t:main}
Let $p$ be an odd prime and let $n,m\ge 1$. Let
$\alpha,\beta\in\Z$ be such that $\gcd(p,\alpha)=1$ and $\gcd(p,\beta)=1$.
The polynomial $f(x)=p^n+ p^m\beta x +\alpha x^2$
is reducible in $\Z[[x]]$ if and only if it is reducible in $\Z_p[x]$. 
\end{theorem}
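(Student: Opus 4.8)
The plan is to assemble Theorem~\ref{t:main} from the case analysis already developed in Propositions~\ref{p-easycase} and \ref{p-hardcase}, treating separately the relation between $n$ and $2m$. First I would dispose of the two cases handled directly by Proposition~\ref{p-easycase}: if $2m<n$ then $f$ is reducible in both rings, and if $2m>n$ with $n$ odd then $f$ is irreducible in both rings; in either situation the ``if and only if'' is immediate. This leaves the case $n=2\nu$ even (with $m\ge\nu$, the remaining subcase of $2m>n$, together with the case $2m=n$ which forces $n$ even as well). So the real content is to show that for $n=2\nu$ and $m\ge\nu$, reducibility of $f(x)=p^{2\nu}+p^m\beta x+\alpha x^2$ in $\Z[[x]]$ is equivalent to its reducibility in $\Z_p[x]$.

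For the even case I would invoke Proposition~\ref{p-hardcase}, which says $f(x)$ is reducible in $\Z[[x]]$ iff $\f(x)=p^2+p^{m-\nu+1}\beta x+\alpha x^2$ is reducible in $\Z_p[x]$. So it remains only to check that $\f(x)$ is reducible in $\Z_p[x]$ if and only if $f(x)$ is reducible in $\Z_p[x]$. This is a purely $p$-adic computation with discriminants: the discriminant of $f$ is $p^{2m}\beta^2-4\alpha p^{2\nu}=p^{2\nu}\bigl(p^{2(m-\nu)}\beta^2-4\alpha\bigr)$, and the discriminant of $\f$ is $p^{2(m-\nu+1)}\beta^2-4\alpha p^2=p^2\bigl(p^{2(m-\nu)}\beta^2-4\alpha\bigr)$. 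Both discriminants are thus an even power of $p$ times the common factor $p^{2(m-\nu)}\beta^2-4\alpha$; since a unit times an even power of $p$ is a square in $\Z_p$ exactly when the unit part is a square, one discriminant is a square in $\Z_p$ iff the other is. A primitive quadratic over the UFD $\Z_p$ (or over the field $\Q_p$, noting the leading coefficient $\alpha$ is a $p$-adic unit) is reducible iff its discriminant is a square, so $f$ and $\f$ are simultaneously reducible in $\Z_p[x]$, and chaining this with Proposition~\ref{p-hardcase} finishes the even case.

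Finally I would reconcile the boundary case $2m=n$: then $n=2m$ is automatically even, so $\nu=m$ and we are in the situation $m\ge\nu$ already covered, giving consistency with part~$(i)$ of Proposition~\ref{p-easycase} in the limit (in that part $2m<n$ strictly, so there is no overlap to worry about, but it is worth a sentence confirming the ranges of $n,m$ exhaust all possibilities: $2m<n$, $2m>n$ with $n$ odd, and $n$ even with $m\ge\nu$). The only mildly delicate point is making sure the discriminant bookkeeping is airtight — in particular that $m-\nu\ge 0$ so that $p^{2(m-\nu)}\beta^2-4\alpha$ is genuinely an integer (hence a $p$-adic integer) and that when $m=\nu$ this common factor is $\beta^2-4\alpha$, which may itself be divisible by $p$; but since we only compare whether $f$ and $\f$ have square discriminants, and both discriminants differ from $p^{2(m-\nu)}\beta^2-4\alpha$ by the explicit square factors $p^{2\nu}$ and $p^2$ respectively, the equivalence holds regardless of the $p$-adic valuation of the common factor. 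I do not expect any serious obstacle here; the theorem is essentially a synthesis step, and the substantive work was done in the preceding propositions and lemmas.
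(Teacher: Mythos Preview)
Your proposal is correct and follows essentially the same approach as the paper. The paper's own proof is a single sentence invoking Propositions~\ref{p-easycase} and~\ref{p-hardcase} together with the observation that $f(x)$ is reducible in $\Z_p[x]$ iff $\f(x)$ is; you have simply spelled out the case split and the discriminant comparison (that $\operatorname{disc}(f)=p^{2\nu}\cdot D$ and $\operatorname{disc}(\f)=p^{2}\cdot D$ with $D=p^{2(m-\nu)}\beta^2-4\alpha$) that the paper leaves implicit.
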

\begin{proof}
Using the fact that $f(x)$ is reducible in $\Z_p[x]$ iff $\f(x)$ is 
reducible in $\Z_p[x]$, the statement of the theorem follows from 
Proposition~\ref{p-easycase} and Proposition~\ref{p-hardcase}.
\end{proof}

\begin{remark}
The previous theorem is not valid when $m=0$. In fact, if $p\ndiv \beta$, 
any power series of the form $p^n+\beta x+\cdots$ is irreducible in
$\Z[[x]]$. However, any polynomial $p^n+\beta x+\alpha x^2$ with
$\gcd(p,\beta)=1$ is reducible in $\Z_p[x]$.
\end{remark}

We finish this section with the remaining case: $\beta=0$.

\begin{proposition}\label{case:pbeta=0}
Let $p$ be an odd prime and let $n\ge 1$. Let $\alpha\in\Z$ be such that 
$\gcd(p,\alpha)=1$. The polynomial $f(x)=p^n+ \alpha x^2$ is reducible 
in $\Z[[x]]$ if and only if it is reducible in $\Z_p[x]$.
\end{proposition}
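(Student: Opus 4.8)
The plan is to separate the cases of $n$ odd and $n$ even and, in each, obtain a direct characterization of reducibility in $\Z[[x]]$ that coincides with the $\Z_p$-criterion. Note first that the discriminant of $f$ is $-4\alpha p^n$; since $p$ is odd and $\gcd(p,\alpha)=1$, this is a square in $\Z_p$ if and only if $n$ is even, say $n=2\nu$, and $-\alpha$ is a square in $\Z_p$, which by Hensel's lemma is equivalent to $-\alpha$ being a quadratic residue mod $p$. So $f$ is reducible in $\Z_p[x]$ precisely when $n=2\nu$ and $-\alpha$ is a quadratic residue mod $p$, and the task is to show the same condition governs reducibility in $\Z[[x]]$.

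First I would dispose of $n$ odd. If $f(x)=a(x)b(x)$ is a proper factorization in $\Z[[x]]$, then $a_0=p^s$ and $b_0=p^t$ with $s,t\ge 1$ and $s+t=n$. Comparing the coefficients of $x^2$, namely $\alpha=p^sb_2+a_1b_1+p^ta_2$, and using $\gcd(p,\alpha)=1$ forces $p\ndiv a_1$ and $p\ndiv b_1$; then the vanishing coefficient of $x$ gives $p^sb_1+p^ta_1=0$, which forces $s=t$, hence $n$ even, a contradiction. So $f$ is irreducible in $\Z[[x]]$ when $n$ is odd, and likewise in $\Z_p[x]$, since $-4\alpha p^n$ then has odd $p$-adic valuation.

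Next, for $n=2\nu$, the same coefficient analysis shows that any proper factorization $f=a(x)b(x)$ in $\Z[[x]]$ must have $a_0=b_0=p^\nu$; the coefficient of $x$ then gives $b_1=-a_1$, and the coefficient of $x^2$ gives $a_1^2\equiv-\alpha\pmod{p^\nu}$ with $p\ndiv a_1$. In particular $-\alpha$ is a quadratic residue mod $p$, which settles the ``only if'' direction. For the converse, assume $-\alpha$ is a quadratic residue mod $p$ and use Hensel's lemma to pick $a_1\in\Z$ with $a_1^2\equiv-\alpha\pmod{p^\nu}$ and $p\ndiv a_1$. I would then build the factors recursively: set $a_0=b_0=p^\nu$, let $s_1:=a_1+b_1=0$ and $s_2:=(a_1^2+\alpha)/p^\nu\in\Z$, and for each $k\ge 3$ rewrite the requirement that the coefficient of $x^k$ in $\big(\sum a_jx^j\big)\big(\sum b_jx^j\big)$ vanish as a single linear equation of the form $p^\nu s_k-2a_1a_{k-1}=-v_k$, where $v_k$ is an explicit expression in the already-chosen $a_1,\dots,a_{k-2}$ and $s_1,\dots,s_{k-1}$ (with $b_j:=s_j-a_j$). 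Since $\gcd(p^\nu,2a_1)=1$, this equation has a solution in integers $a_{k-1},s_k$, and the resulting sequences yield $f(x)=\big(\sum a_kx^k\big)\big(\sum b_kx^k\big)$ with both factors non-units; hence $f$ is reducible in $\Z[[x]]$.

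The argument is essentially routine. The one point needing care is the bookkeeping in the recursion: one must check that $v_k$ involves only coefficients fixed at earlier stages, so that stage $k$ amounts to solving a single linear Diophantine equation in the two new unknowns $a_{k-1},s_k$, with solvability coming from $p\ndiv 2a_1$. (Alternatively, the ``if'' direction can be obtained by adapting Lemmas~\ref{lemmaA} and \ref{lemmaC} to $\beta=0$, but the direct construction above is short enough to present in full.)
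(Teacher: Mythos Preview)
Your proposal is correct and follows essentially the same route as the paper's proof: reduce to the discriminant criterion in $\Z_p$, eliminate odd $n$ by the coefficient comparison forcing $s=t$, and for $n=2\nu$ derive $a_0=b_0=p^\nu$, $s_1=0$, $a_1^2+\alpha\equiv 0\pmod{p^\nu}$, then build the factorization recursively by solving $p^\nu s_{N+1}-2a_1a_N=-v_N$ using $\gcd(p^\nu,2a_1)=1$. The only differences from the paper are cosmetic (your recursion index is shifted by one relative to the paper's, and you fold the base step $a_2,s_3$ into the general recursion), so there is nothing substantive to compare.
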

\begin{proof}
Recall that $f(x)=p^n+ \alpha x^2$ is reducible in $\Z_p[x]$
if and only if its discriminant $-4\alpha p^n$ is a nonzero square in 
$\Z_p$. This in turn is the case if and only if $n$ is even and 
$-\alpha$ is a square in $\Z/p\Z$. We will show that these conditions
on $n$ and $\alpha$ are equivalent to $f(x)$ being reducible in $\Z[[x]]$.

For $f(x)$ to admit a factorization of the form
\[ 
   p^n+\alpha x^2 = (a_0+a_1x+a_2x^2+\cdots)(b_0+b_1x+b_2x^2+\cdots)
\]
it is necessary to solve the equations
\begin{gather*}
a_0=p^t \text{ and } b_0=p^s \;\text{ with } t+s=n, \\
0=p^t b_1 + p^s a_1, \\
\alpha = p^t b_2 + a_1b_1 + p^s a_2.
\end{gather*}
Since $\gcd(p,\alpha)=1$, these three equations can be solved in $\Z$ 
only when $s=t$, that is, when $n$ is even. Now, if
$n=2\nu$, we must have $a_0=b_0=p^\nu$, $s_1=a_1+b_1=0$, and 
$\alpha=p^\nu(a_2+b_2)-a_1^2$. Thus, if $f(x)$ is reducible
in $\Z[[x]]$, then $-\alpha$ is a square in $\Z/p\Z$. On the other
hand, if $-4\alpha p^{2\nu}$ is a nonzero square in $\Z_p$,  
so is $-\alpha$, i.e., $y^2 +\alpha$ has a root in $\Z_p$. Let $a_1$
and $s_2$ be integers such that
\[ a_1^2 + \alpha = p^{\nu} s_2. \]
Note that $\gcd(p^\nu,2a_1)=1$. 
Therefore, there are integers $a_2$ and $s_3$ such that
\[ 0=p^{\nu}s_3 -2a_1a_2 + a_1s_2. \]
Finally, a factorization of $f(x)$ in $\Z[[x]]$ can be obtained with 
the sequences $\{a_k\}$ and $\{s_{k+1}\}$ defined inductively
for $N\ge 3$ by the equation
\[ 0= p^\nu s_{N+1} - 2a_1a_N + v_N, \]
where $v_N=a_1s_N +\sum_{k=2}^{N-1}a_k(s_{N+1-k}-a_{N+1-k})$.
\end{proof}

\begin{remark}
With the appropriate adjustments in the proofs, all the results in this 
section apply verbatim to the case when $p=2$.  For the interested
reader, we recall that an element $2^nu\in\Q_2^*$ is a square iff $n$
is even and $u\equiv 1\pmod 8$.
\end{remark}

\section{Further reducibility criteria}
\label{sec:FurtherCriteria}

In this last section we briefly discuss the factorization in
$\Z[[x]]$ of power series whose quadratic part is a polynomial
like the ones studied in the previous sections. More precisely, 
we consider power series of the form
\begin{equation}\label{GenPowerSeries}
 f(x)=p^n + p^m\beta x + \alpha x^2 +\sum_{k=3}^\infty c_k x^k, 
\end{equation}
where $\alpha$ and $\beta$ are integers such that
$\gcd(p,\alpha)=1=\gcd(p,\beta)$. 

For simplicity, we only discuss the case when $p$ is an odd prime.
We will focus on the situations for which the arguments in 
Section~\ref{sec:OddPrimePower} extend with little or no additional 
effort. For instance, if $m\not=\frac{n}{2}$, the reducibility of $f(x)$ 
in $\Z[[x]]$ follows the same pattern as the reducibility of its 
quadratic part.  In fact, we can use the exact same arguments from
Section~\ref{sec:OddPrimePower} to prove the following two propositions.

\begin{proposition}
If $2m<n$, then \eqref{GenPowerSeries} is reducible in $\Z[[x]]$. 
If $2m>n$ and $n$ is odd, then \eqref{GenPowerSeries} is irreducible.
\end{proposition}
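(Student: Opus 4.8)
The plan is to mimic, essentially verbatim, the proofs of Proposition~\ref{p-easycase}$(i)$ and $(ii)$, observing that the higher-order terms $\sum_{k\ge 3}c_kx^k$ never interfere with the arguments that establish reducibility or irreducibility. Recall that for the quadratic part we worked with the discriminant only to locate a root of an auxiliary quadratic modulo a power of $p$; once that root was in hand, the factorization (or the contradiction) was built one coefficient at a time by solving a linear equation whose leading coefficient was a unit times a power of $p$.

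For the case $2m<n$: the strategy is to look for a factorization $f(x)=a(x)b(x)$ with $a_0=p^m$, $b_0=p^{n-m}$, mirroring the proof of Proposition~\ref{p-easycase}$(i)$. The first two coefficient equations are unchanged, since $c_k$ for $k\ge 3$ does not affect the coefficients of $1$, $x$, or $x^2$; so we still get $t_1=\beta$ and an integer $a_1$, $t_2$ with $p^{n-2m}a_1^2-\beta a_1+\alpha=p^mt_2$ using that $g(x)=p^{n-2m}x^2-\beta x+\alpha$ is reducible in $\Z_p[x]$ (the discriminant $\beta^2-4\alpha p^{n-2m}$ being a square there). For $N\ge 2$ the $(N+1)$-st coefficient equation now reads $c_{N+1}=\sum_{k=0}^{N+1}a_kb_{N+1-k}$ instead of $0=\cdots$; but the same algebraic rearrangement as in Proposition~\ref{p-easycase}$(i)$ turns this into $c_{N+1}=p^mt_{N+1}+(\beta-2p^{n-2m}a_1)a_N+v_N$, and since $\gcd(p,\beta)=1$ the coefficient $\beta-2p^{n-2m}a_1$ is a unit mod $p$, so this linear equation in $t_{N+1}$, $a_N$ is solvable over $\Z$. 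Hence $f(x)$ is reducible in $\Z[[x]]$.

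For the case $2m>n$ with $n$ odd: we argue exactly as in Proposition~\ref{p-easycase}$(ii)$. Suppose $f(x)=a(x)b(x)$ is a proper factorization; since the constant term is $p^n$ we must have $a_0=p^s$, $b_0=p^t$ with $s+t=n$, $s,t\ge 1$, and $s\ne t$ because $n$ is odd; say $t>s$. The coefficient equations for $x$ and $x^2$ are untouched by the $c_k$'s, giving $p^m\beta=p^ta_1+p^sb_1$ and $\alpha=p^ta_2+a_1b_1+p^sb_2$; from $\gcd(p,\alpha)=1$ we get $\gcd(p,a_1)=\gcd(p,b_1)=1$, which forces $s=m$ (else $p\mid b_1$ from the first equation and the hypothesis $s<t$, $s<m$), contradicting $2m>n=s+t$. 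So $f(x)$ is irreducible in $\Z[[x]]$.

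I do not anticipate a genuine obstacle here; the point of the proposition is precisely that the tail $\sum c_kx^k$ is inert for these two regimes. The only thing to be careful about is bookkeeping: the inhomogeneous terms $c_k$ appear on the left side of each coefficient equation for $k\ge 3$, and one should check (a one-line verification) that they are simply absorbed into the ``$v_N$'' quantities, leaving the solvability criterion — a unit coefficient on one of the unknowns — exactly as before. Since the excerpt explicitly states that ``we can use the exact same arguments from Section~\ref{sec:OddPrimePower},'' the write-up can be kept to a couple of sentences pointing to Proposition~\ref{p-easycase} and noting the harmless modification.
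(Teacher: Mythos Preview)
Your proposal is correct and matches the paper's approach exactly: the paper gives no separate proof for this proposition but simply states that ``we can use the exact same arguments from Section~\ref{sec:OddPrimePower},'' and your write-up spells out precisely how Proposition~\ref{p-easycase}$(i)$ and $(ii)$ carry over, with the harmless replacement of $0$ by $c_{N+1}$ on the left of the coefficient equations for $N\ge 2$. The only cosmetic point is that in the irreducibility case you might note that $s<t$ and $s+t=n<2m$ already force $s<m$, so the ``else'' clause is the only case to rule out.
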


\begin{proposition}
If $2m>n$ and $n$ is even, then \eqref{GenPowerSeries} is reducible in 
$\Z[[x]]$ if and only if $-\alpha$ is a quadratic residue \textup{mod} $p$.
\end{proposition}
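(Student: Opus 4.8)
The plan is to adapt the two-step strategy used in Section~\ref{sec:OddPrimePower} for the purely quadratic case, namely: first reduce to a statement about $p$-adic square roots (via the discriminant), and then exhibit or obstruct an explicit power-series factorization by a term-by-term recursion. Since we are now in the regime $2m>n$ with $n=2\nu$ even, and since the hypothesis $m\ne n/2$ is in force (indeed $m\ge\nu+1$ when $2m>n$), the relevant comparison is with the polynomial $\f(x)=p^2+p^{m-\nu+1}\beta x+\alpha x^2$, whose discriminant is $p^2$ times that of $g(x)=x^2-p^{m-\nu}\beta x+\alpha$. Because $m-\nu\ge 1$, the constant term of $g$ is the unit $\alpha$ and its middle coefficient is divisible by $p$, so $g$ is reducible in $\Z_p[x]$ exactly when $\beta^2 p^{2(m-\nu)}-4\alpha$ is a square in $\Z_p$, which (as $p$ is odd and $m-\nu\ge 1$) happens precisely when $-\alpha$ is a square mod $p$, i.e.\ when $-\alpha$ is a quadratic residue. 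So the target condition is the natural one.

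For the ``if'' direction I would run the same kind of inductive factorization algorithm as in the proof of Lemma~\ref{lemmaC}, Case~1 (the case $m>\nu$), but now carrying the tail $\sum_{k\ge 3}c_k x^k$ along. One sets $a_0=b_0=p^\nu$, $s_1=a_1+b_1=p^{m-\nu}\beta$, picks $a_1\in\Z$ with $a_1^2-p^{m-\nu}\beta a_1+\alpha\equiv 0\pmod{p^\nu}$ (possible since $-\alpha$ is a QR mod $p$ and hence $g$ has a root in $\Z/p^\nu\Z$), and defines $s_2$ accordingly; then at each subsequent stage $N\ge 2$ the coefficient equation for $x^{N+1}$ reads
\begin{equation*}
0=p^\nu s_{N+1}+(p^{m-\nu}\beta-2a_1)a_N+v_N+c_{N+1},
\end{equation*}
where $v_N$ is the already-known convolution term and we set $c_2=\alpha$, $c_{N+1}=0$ for $N+1>\deg$ or beyond the given tail as appropriate. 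Since $\gcd(p,a_1)=1$ forces $\gcd(p^\nu,p^{m-\nu}\beta-2a_1)=1$, this linear equation is solvable for $a_N,s_{N+1}\in\Z$, and the resulting sequences $\{a_k\},\{b_k\}$ with $b_k=s_k-a_k$ yield a genuine factorization in $\Z[[x]]$. The only change from Section~\ref{sec:OddPrimePower} is the harmless presence of the extra constant $c_{N+1}$ on the right-hand side, which does not affect solvability.

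For the ``only if'' direction, suppose $f(x)=a(x)b(x)$ properly in $\Z[[x]]$. Exactly as in the proof of Lemma~\ref{lemmaA}, comparing constant and linear coefficients and using $\gcd(p,\alpha)=\gcd(p,\beta)=1$ together with $2m>n$ forces $a_0=b_0=p^\nu$ and $\gcd(p,a_1)=\gcd(p,b_1)=1$; and since $m>\nu$, the linear relation $p^m\beta=p^\nu(a_1+b_1)$ gives $s_1=a_1+b_1=p^{m-\nu}\beta\equiv 0\pmod p$. The $x^2$-coefficient relation then reads $\alpha=p^\nu s_2+(p^{m-\nu}\beta-a_1)a_1$, so modulo $p$ we get $\alpha\equiv -a_1^2\pmod p$, i.e.\ $-\alpha$ is a quadratic residue mod $p$. (Here the tail plays no role, since $c_k$ with $k\ge 3$ never enters the equations for the coefficients of $x^0,x^1,x^2$.) Combining the two directions completes the proof. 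The main obstacle is merely bookkeeping: one must confirm that appending the $c_k$-terms to the recursion of Lemma~\ref{lemmaC} does not break the inductive step, and that in the converse only the first three coefficient equations are needed, both of which are routine once the structure is set up; there is no genuinely new difficulty relative to Section~\ref{sec:OddPrimePower}.
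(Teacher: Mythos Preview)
Your proposal is correct and follows precisely the approach the paper indicates: carry the tail $c_k$ through the recursion of Lemma~\ref{lemmaC}, Case~1, for the ``if'' direction, and read off $-\alpha\equiv a_1^2\pmod p$ from the $x^2$-coefficient equation for the ``only if'' direction. (Note a harmless sign slip in your displayed recursion: the equation should read $c_{N+1}=p^\nu s_{N+1}+(p^{m-\nu}\beta-2a_1)a_N+v_N$, not $0=\cdots+c_{N+1}$, but this does not affect solvability.)
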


If $2m=n$, the situation is in general more involved and the reducibility
of $f(x)$ depends on the roots of $x^2-\beta x +\alpha$. 
The following proposition is easy to prove.

\begin{proposition}
If $2m=n$ and the polynomial $x^2-\beta x +\alpha$ has a simple root
in $\Z/p^m\Z$, then \eqref{GenPowerSeries} is reducible in $\Z[[x]]$. 
\end{proposition}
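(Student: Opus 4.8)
The plan is to run the same coefficient-by-coefficient factorization scheme used throughout Section~\ref{sec:OddPrimePower}, now carrying along the extra tail $\sum_{k\ge 3}c_kx^k$. Write $n=2m$. The hypothesis that $g(x)=x^2-\beta x+\alpha$ has a simple root in $\Z/p^m\Z$ furnishes an integer $a_1$ with $a_1^2-\beta a_1+\alpha\equiv 0\pmod{p^m}$ and with $g'(a_1)=2a_1-\beta$ a unit mod $p$, i.e.\ $\gcd(p,2a_1-\beta)=1$; this coprimality is the engine of the whole argument. Fix such an $a_1$ and put $s_2=(a_1^2-\beta a_1+\alpha)/p^m\in\Z$. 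I then look for a factorization $f(x)=a(x)b(x)$ with $a(x)=p^m+\sum_{k\ge 1}a_kx^k$ and $b(x)=p^m+\sum_{k\ge 1}b_kx^k$, using this $a_1$, and with the notation $s_k=a_k+b_k$, $b_k=s_k-a_k$.

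Matching coefficients: the constant term forces $a_0=b_0=p^m$; the $x$-coefficient forces $s_1=\beta$, so $b_1=\beta-a_1$; the $x^2$-coefficient reads $\alpha=p^ms_2+a_1(\beta-a_1)$, which holds by the choice of $a_1$ and $s_2$. For $N\ge 2$, expanding the $x^{N+1}$-coefficient of the product and substituting $b_j=s_j-a_j$ gives
\[
c_{N+1}=p^ms_{N+1}+(\beta-2a_1)\,a_N+a_1s_N
   +\sum_{k=2}^{N-1}a_k\bigl(s_{N+1-k}-a_{N+1-k}\bigr),
\]
where every quantity except $a_N$ and $s_{N+1}$ has been determined at an earlier stage (recall $s_N$ is produced at stage $N$). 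Since $\gcd(\beta-2a_1,p^m)=1$, this linear Diophantine equation is solvable over $\Z$: choose $a_N$ so that $(\beta-2a_1)a_N$ is congruent mod $p^m$ to the (known) right‑hand side minus $a_1s_N$ minus the sum, and then read off $s_{N+1}$. Inductively this produces integer sequences $\{a_k\}$ and $\{b_k\}=\{s_k-a_k\}$, and by construction the associated power series satisfy $a(x)b(x)=f(x)$ in all degrees.

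Finally I would note that this is a proper factorization: $a(x)$ and $b(x)$ each have constant term $p^m$ with $m\ge 1$, so neither is a unit in $\Z[[x]]$, while $f(x)$ is a nonzero non-unit since $f_0=p^{2m}$; hence $f(x)$ is reducible in $\Z[[x]]$. I do not expect any genuine obstacle here — the only real content is the observation that the simple‑root hypothesis supplies the unit $\beta-2a_1$ mod $p$ that makes each equation of the recursion solvable over $\Z$, and the remainder is the same bookkeeping as in Case~1 of the proof of Lemma~\ref{lemmaC} and in the proof of Proposition~\ref{case:pbeta=0}.
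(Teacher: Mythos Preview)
Your argument is correct and is exactly the proof the paper has in mind: the paper omits the details, saying only that the proposition ``is easy to prove,'' but the intended method is precisely the recursive factorization scheme of Section~\ref{sec:OddPrimePower} (Case~1 of Lemma~\ref{lemmaC} and Proposition~\ref{case:pbeta=0}), driven by the unit $\beta-2a_1\pmod p$ that the simple-root hypothesis supplies. One tiny quibble: in your parenthetical, $s_N$ is produced at stage $N-1$ (together with $a_{N-1}$), not at stage $N$; but the inductive bookkeeping is otherwise exactly right.
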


If $x^2-\beta x +\alpha$ has a double root in $\Z/p^m\Z$, it is not
enough to look at the quadratic part of $f(x)$ and its reducibility
depends on the coefficients $c_k$. To illustrate this fact,
consider for example the power series 
\[ f(x)=p^2+p\beta x+\alpha x^2 + c_3x^3 + c_4x^4+ \cdots, \]
with $\alpha,\beta\in\Z$ such that $\beta^2-4\alpha=p^2 q$, where $q$ is 
a quadratic residue mod $p$ with $\gcd(p,q)=1$. 
In order to get a proper factorization
$f(x)=a(x)b(x)$ in $\Z[[x]]$, we must have $a_0=p=b_0$, 
$\beta=s_1$, as well as 
\begin{align*}  
 \alpha &= ps_2+a_1(\beta-a_1), \\ 
 c_3 &= ps_3 + (\beta-2a_1)a_2 + a_1s_2,
\end{align*}
where $s_k=a_k+b_k$. Then $(\beta-2a_1)^2-(\beta^2-4\alpha)=4ps_2$, 
which implies $p\mid s_2$. Therefore, $f(x)$ is irreducible in $\Z[[x]]$
unless $p\mid c_3$.

On the other hand, if $p^2\mid c_k$ for every $k\ge 3$, then with the 
same assumptions on $\alpha$ and $\beta$ as above, we can find 
$a_k,b_k\in\Z$ such that $a(x)=\sum a_kx^k$ and $b(x)=\sum b_kx^k$ give 
a proper factorization $f(x)=a(x)b(x)$. Note that $\beta^2-4\alpha$
is a square in $\Z_p$, so the polynomial $g(x)=x^2-\beta x +\alpha$ 
is reducible in $\Z_p[x]$.  In particular, $g(x)$ has a root in 
$\Z/p^3\Z$, so there are $a,r\in\Z$ such that
\[ a^2 - \beta a +\alpha = p^3 r. \]
Moreover, since $(\beta-2a)^2-(\beta^2-4\alpha)=4p^3r$ and $p^2\mid
(\beta^2-4\alpha)$, we have $p\mid(\beta-2a)$. In fact, there is an
integer $t$ with $\gcd(p,t)=1$ such that 
\[ \beta-2a=pt. \] 
Choose $a_1=a$, $\,\tilde s_2=r$, and write $c_{k+1}= p^2\tilde c_{k+1}$.
Since $\gcd(p,t)=1$, for $k\ge 2$ we can choose $\tilde a_k$ and 
$\tilde s_{k+1}$ inductively as integer solutions of the equation
\[ \tilde c_{k+1}=p\tilde s_{k+1}+t\tilde a_k+ a_1\tilde s_k +
   \sum_{j=2}^{k-1} \tilde a_j(p\tilde s_{k+1-j}-\tilde a_{k+1-j}). \]
If we let $a_k=p\tilde a_k$ and $s_k=p^2\tilde s_k$, then multiplication
by $p^2$ gives
\begin{align*}
 c_{k+1} &= p s_{k+1}+pt a_k+ a_1 s_k +
   \sum_{j=2}^{k-1} a_j(s_{k+1-j}-a_{k+1-j}) \\
 &= ps_{k+1}+(\beta-2a_1)a_k + a_1s_k + \sum_{j=2}^{k-1} a_jb_{k+1-j} \\
 &= \sum_{j=0}^{k+1} a_jb_{k+1-j}.
\end{align*}
In other words, $a(x)$ and $b(x)$ provide a factorization of $f(x)$ in 
$\Z[[x]]$, as claimed.


\end{document}